\g@addto@macro{\UrlBreaks}{\UrlOrds}
\DeclareMathOperator{\dom}{dom}
\DeclareMathOperator{\id}{id}
\numberwithin{ut}{section}
\newtheorem{uq}{Question}
\newtheorem{ucl}{Claim}
\theoremstyle{remark}
\theoremstyle{definition}
\newtheorem{ue}{Example}
\begin{document}

\title{Dispersion points and rational curves}

\subjclass[2010]{54F45, 54F15, 54D35, 54G20} 
\author{David S. Lipham}
\address{Department of Mathematics and Statistics, Auburn University, Auburn,
AL 36849, United States of America}
\email{dsl0003@auburn.edu}

\begin{abstract}
We construct two  connected plane sets which can be embedded into rational curves. The first is  a biconnected set with a dispersion point. It answers a question of Joachim Grispolakis. The second is indecomposable.   Both examples are completely metrizable.\end{abstract}

\maketitle

\section{Introduction}

\textit{Sur les ensembles connexes} \cite{kk}, published by B. Knaster and C. Kuratowski in 1921, was the first paper devoted to the study of connected sets in topology \cite{wil}.  The most surprising feature of the article was its presentation of an \textit{ensemble biconnexe}, known today as the ``Knaster-Kuratowski fan'' or ``Cantor's leaky tent''. The essence of the example is conveyed in  \cite{sa}: ``Cantor's leaky tent is connected, but just barely: remove one point and the whole thing falls apart.''  
 
The Knaster-Kuratowski fan has a certain countable structure. It is \textit{rational}, meaning it has a basis of open sets with countable boundaries. However, it  does not embed into any rational continuum \cite[Example 1]{tym}.\footnote{Tymchatyn's argument in \cite{tym} shows that in order for a connected subset $X$ of the Cantor fan to embed into a rational continuum,  the complement of $X$ must contain a dense union of arcs.}   Hints to  \cite[Exercise 1.4.C(c)]{eng1} reveal, moreover,  that  each of its  compactifications has a dense subspace homeomorphic to $\mathbb P\times (0,1)$, where $\mathbb P$ is the set of irrational numbers. In this sense, the Cantor fan's uncountable structure is  encoded in  the Knaster-Kuratowski fan. 

In the Houston Problem Book  \cite[Problem 79]{cookproblem}, Joachim Grispolakis asked: \textit{Is it true that no biconnected set with a dispersion point can be embedded into a rational continuum?}  The primary goal of this paper  is to show the answer is ``no''. We will construct a  biconnected subset of the Cantor fan which has a dispersion point and can be embedded into a rational continuum. 

By a technique in \cite{lip}, our counterexample  generates an  indecomposable connected  plane set  which also can be embedded into a rational continuum.   Indecomposable continua, by contrast, are known to  locally resemble  the Cantor set times the  unit interval.  And  every separator of an indecomposable continuum contains a Cantor set.  
 
For rational compactifications of our examples, we will rely on the following theorem by  E.D. Tymchatyn  and S.D. Iliadis: \textit{A separable metrizable space $X$ has a rational metrizable compactification if and only if $X$ has a basis of open sets with scattered boundaries}. See  \cite[Theorem 1]{tym} and  \cite[Theorem 8]{ili}.\footnote{The introduction to  \cite{ili} indicates there was a gap in the original proof of \cite[Theorem 1]{tym}.   \cite[Theorem 8]{ili}  is a stronger result with a corrected  proof.  A shorter proof of \cite[Theorem 8]{ili} is given in \cite{note}. } Our $X$'s will have bases of open sets with discrete boundaries.  Suslinian compactifications, which are \textit{almost} rational,  will be described in detail near the end of the paper.  Each will be the union of a zero-dimensional set and a   null-sequence of Hawaiian earrings.

\section{Preliminaries}

\subsection{Terminology}

A connected topological space $X$ is \textit{biconnected} provided $X$ cannot be written as the union of two disjoint non-degenerate connected subsets. 

A point $x$ in a non-degenerate connected space $X$ is a \textit{dispersion point} if $X\setminus \{x\}$ is hereditarily disconnected, i.e. if every connected subset of $X\setminus \{x\}$ is degenerate. Every connected set with a dispersion point is biconnected, but the converse is false under the Continuum Hypothesis \cite{mil}. 

A space X is \textit{rational} (\textit{rim-discrete}) provided $X$ has a basis  of open sets whose boundaries are  countable (discrete). If $X$ has a basis of clopen sets, then $X$ is  \textit{zero-dimensional}. 

A \textit{continuum} is a  connected compact metrizable space with more than one point.  

A continuum is \textit{Suslinian} if there is no uncountable collection of pairwise disjoint subcontinua \cite{lel}. Suslinian continua are frequently called \textit{curves} because they are $1$-dimensional. 

Rational continua are Suslinian. In fact, by second countability, each rational continuum can be written as the union of a zero-dimensional subspace and a countable set.  Every subcontinuum  must intersect the countable set.

A connected space $X$ is \textit{indecomposable} if $X$ cannot be written as the union of two proper closed connected subsets.  This is equivalent to saying every proper closed connected subset of $X$ is nowhere dense.  A connected space $X$ is \textit{widely-connected} if every proper closed connected subset of $X$ is degenerate. 
 
Biconnected and widely-connected spaces are known to be \textit{punctiform}, meaning they contain no continua.

\subsection{The Cantor set and Cantor fan} 

Let $C\subseteq [0,1]$ be the middle-thirds Cantor set.   For each $\sigma\in 2^{<\omega}$ define 
\begin{align*}0(\sigma)&=\textstyle\sum_{k=0}^{n-1} \frac{2\sigma(k)}{3^{k+1}}\text{; and} \\ 
1(\sigma)&=0(\sigma)+\textstyle\frac{1}{3^{n}},\end{align*}
where $n=\dom(\sigma)$.  The points $0(\sigma)$ and $1(\sigma)$ are called the ``endpoints'' of $C$.  More precisely, they are the endpoints of the  connected components of $\mathbb R \setminus C$. 

Let $B(\sigma)=[0(\sigma),1(\sigma)]\cap C$. The set of all $B(\sigma)$'s  is the canonical clopen basis for $C$.

Let $ \nabla:C\times[0,1]\to [0,1]^2$  be defined by  $\langle c,y\textstyle\rangle\mapsto\langle\frac{y(2c-1)+1}{2},y\rangle,$ so that $\nabla\restriction C\times(0,1]$ is a homeomorphism and $\nabla^{-1}\{\langle \frac{1}{2},0\rangle\}=C\times \{0\}$.  For any set $X\subseteq C\times (0,1]$  we put  $$\textbf{\d{$\nabla$}}X=\textstyle \nabla[X]\cup\textstyle \{\langle \frac{1}{2},0\rangle\}.$$
The \textit{Cantor fan}  is the set $\nabla(C\times [0,1])=\textbf{\d{$\nabla$}}(C\times (0,1])$.

\begin{figure}[h]
\centering
	\includegraphics[height=40mm,width=116mm]{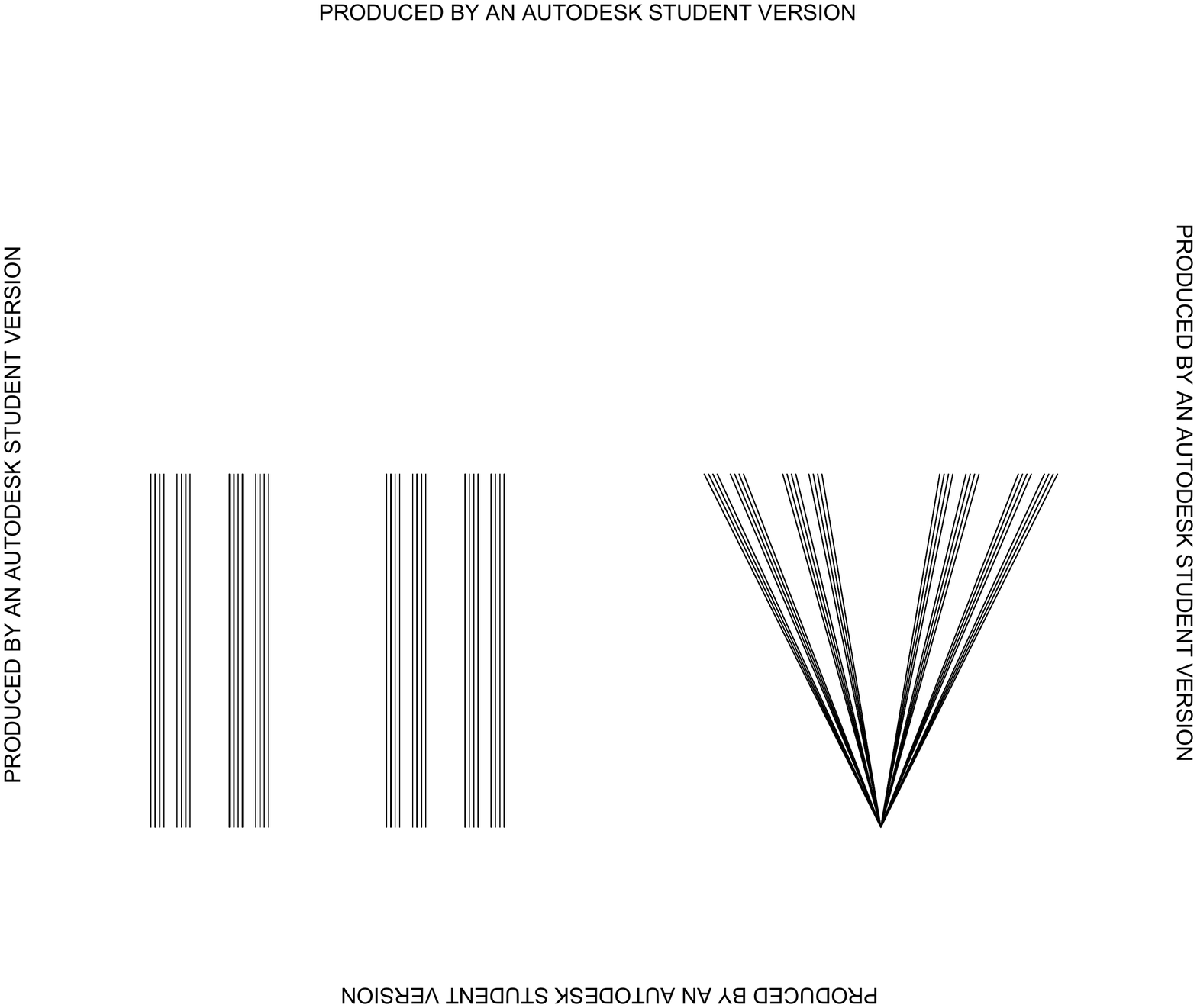}
	\caption{$C\times [0,1]$ and the Cantor fan}
\end{figure}

\subsection{Null-sequences and Hawaiian earrings} 

Let $\varrho$ denote the Euclidean metric on $\mathbb R^2$. A sequence of sets $A_0,A_1,...\subseteq \mathbb R ^2$ is a \textit{null-sequence} provided the diameter (with respect to $\varrho$) of $A_n$ converges to $0$ as $n\to\infty$. A continuum $H$ is called a \textit{Hawaiian earring} if there is a null-sequence of simple closed curves $J_0,J_1,...\subseteq \mathbb R ^2$ such that $H=\bigcup \{J_n:n<\omega\}$ and $|\bigcap \{J_n:n<\omega\}|=1$.

\section{Main examples}

\begin{ue}Our goal  is to construct a  hereditarily disconnected $X\subseteq C\times (0,1)$ such that $\textbf{\d{$\nabla$}}X$ is rim-discrete and connected. To accomplish this,   we will reduce  $C\times (0,1)$ to a zero-dimensional set $P$ by deleting  $\omega$-many pairwise disjoint copies of a  compact set $D\subseteq C\times [0,1]$.  Then  $X$ will be obtained by adding back a countable point  set $Q$.

Throughout this example, $\pi_0$ and $\pi_1$ are the coordinate projections in $C\times\mathbb R$ defined by $\pi_0(\langle c,r\rangle)=c$ and $\pi_1(\langle c,r\rangle)=r$.

We begin with a function which was introduced by Wojciech D\k{e}bski in \cite{deb}. Let $\{d_n:n<\omega\}\subseteq C$ be a  dense set of non-endpoints. Each $d_n$ is the limit of both increasing and decreasing sequences of points in $C$. Define   $f:C\to [0,1]$ by 
$$f(c)\hspace{2mm}=\hspace{-3mm}\sum_{\{n<\omega:d_n<c\}} \hspace{-5mm}2^{-n-1}.$$ 

Observe that $f$ is  non-decreasing, and $\{d_n:n<\omega\}$ is the set of (jump) discontinuities of $f$; 
\begin{align*}
&\lim\limits_{c\to d_n^{\;-}}f(c)=r_n:=f(d_n)\text{, whereas}\\
&\lim\limits_{c\to d_n^{\;+}}f(c)=s_n:=f(d_n)+2^{-n-1}.
\end{align*}
 
Let  $\textstyle D=\{\langle c,f(c)\rangle:c\in C\}\cup \bigcup \{\{d_n\}\times [r_n,s_n]:n<\omega\}$ be the graph of $f$ together with the vertical arcs from $\langle d_n,r_n\rangle$ to $\langle d_n,s_n\rangle$. Then: 
\begin{itemize}\renewcommand{\labelitemi}{\tiny$\square$}
	\item $D$ is compact; 
	\item $\pi_1[D]=[0,1]$;
	\item  $C\times [0,1]\setminus D$ is the union of two disjoint open sets $\{\langle c,r\rangle\in C\times[0,1]:r<f(c)\}$ and $$\;\;\;\;\;\;\;\;\;\{\langle c,r\rangle\in C\times[0,1]:(r>f(c)\text{ if }c\notin\{d_n:n<\omega\}) \text{ or } (r>s_n\text{ if }c=d_n)\}$$
consisting of all half-open intervals ``below'' $D$ and ``above'' $D$, respectively. In fact,   $D$  extends to a Jordan arc  
which separates $[0,1]^2$ in this manner. 
\end{itemize}

Say that a collection of sets $\mathcal R$ is a \textit{partial tiling} of $C\times \mathbb R$ if: for every $R\in \mathcal R$ there exists $\sigma\in 2^{<\omega}$ and two numbers $a<b\in \mathbb R$ such that  $R=B(\sigma)\times [a,b]$; and  for every two elements $R_1\neq R_2$ of $\mathcal R$ there exists $\sigma\in 2^{<\omega}$ and $c\in \mathbb R$ such that  $R_1\cap R_2\subseteq B(\sigma)\times \{c\}$.

We will recursively define  a sequence $\mathcal R_0,\mathcal R_1, ...$  of finite partial tilings of $C\times \mathbb R$ so that for each $n<\omega$:

\begin{itemize} 
	\item[(i)] $\mathcal R_n$ consists of rectangles $R^n_i=B(\sigma^n_i)\times [a^n_i,b^n_i]$, where $\sigma^n_i\in 2^n$ and $i<|\mathcal R_n|<\omega$;
	\item[(ii)]$0<b^n_i-a^n_i\leq\frac{1}{n+1}$ for all $i<|\mathcal R_n|$;
	\item[(iii)]the linear transformations  $T^n_i:C\times [0,1]\to R^n_i$ defined by  $$T^n_i(\langle c,r\rangle)=\textstyle\langle 0(\sigma^n_i)+\frac{c}{3^n}, a^n_i+r(b^n_i-a^n_i)\rangle$$ are such that $T^n_i[D]\cap T^k_j[D]=\varnothing$ whenever $k< n$ or $i\neq j$; 
	\item[(iv)]  for each $\sigma\in 2^n$, 
$$\pi_1\big[\textstyle B(\sigma)\times \mathbb R\cap \bigcup \{T^k_i[D]:k\leq n\text{ and }i<|\mathcal R_k|\}\big]=[-n,n+1];$$
	\item[(v)] for every arc  $I\subseteq C\times[-n,n+1]\setminus \bigcup \{T^k_i[D]:k\leq n\text{ and }i<|\mathcal R_k|\}$    there are  integers $i<|\mathcal R_n|$,  $k\leq n$, and $j<|\mathcal R_{k}|$ such that $I\subseteq R^n_{i}\cup R^{k}_{j}$ and $$\textstyle d(I,T^{k}_{j}[D]):=\min\{\varrho(x,y):x\in I\text{ and }y\in T^k_j[D]\}<\frac{1}{n+1}+\frac{1}{3^n}.$$  Here,  $\varrho$ is the Euclidean metric on $\mathbb R ^2$.
\end{itemize}

\noindent Note that  $T^n_i$ is just the natural homeomorphism between $C\times[0,1]$ and $R^n_i$.

\noindent\begin{itemize}
\item[$n=0$:] Let $\mathcal R_0=\{R^0_0\}=\{C\times [0,1]\}=\{B(\varnothing)\times [0,1]\}$, and $T^0_0=\id_{C\times[0,1]}$.

\item[$n=1$:]  Tile  $([0,\frac{1}{3}]\cap C)\times [f(\frac{1}{3}),1]$ and $([\frac{2}{3},1]\cap C)\times [0,f(\frac{2}{3})]$ with   four rectangles:
\begin{align*}
	R^1_0&=B(\langle 0\rangle)\times[\textstyle\frac{1}{2}(f(\frac{1}{3})+1),1];\\
	R^1_1&=B(\langle 0\rangle)\times [\textstyle f(\frac{1}{3}),\textstyle\frac{1}{2}(f(\frac{1}{3})+1)];\\
	R^1_2&= B(\langle 1\rangle)\times [\textstyle\frac{1}{2}\textstyle f(\frac{2}{3}), f(\frac{2}{3})];\\
	R^1_3&=  B(\langle 1\rangle)\times [\textstyle 0,\textstyle\frac{1}{2}f(\frac{2}{3})].
\end{align*}
Let $R^1_4,R^1_5,...,R^1_{11}$ enumerate the set of eight rectangles $$\textstyle\{B(\sigma)\times [a,a+\frac{1}{2}]:\sigma=\langle 0\rangle,\langle 1\rangle\text{ and }a=-1,-\frac{1}{2},1,\frac{3}{2}\}.$$ Let $\mathcal R_1=\{R^1_i:i<12\}$.
\begin{figure}[H]
	\includegraphics[scale=0.5]{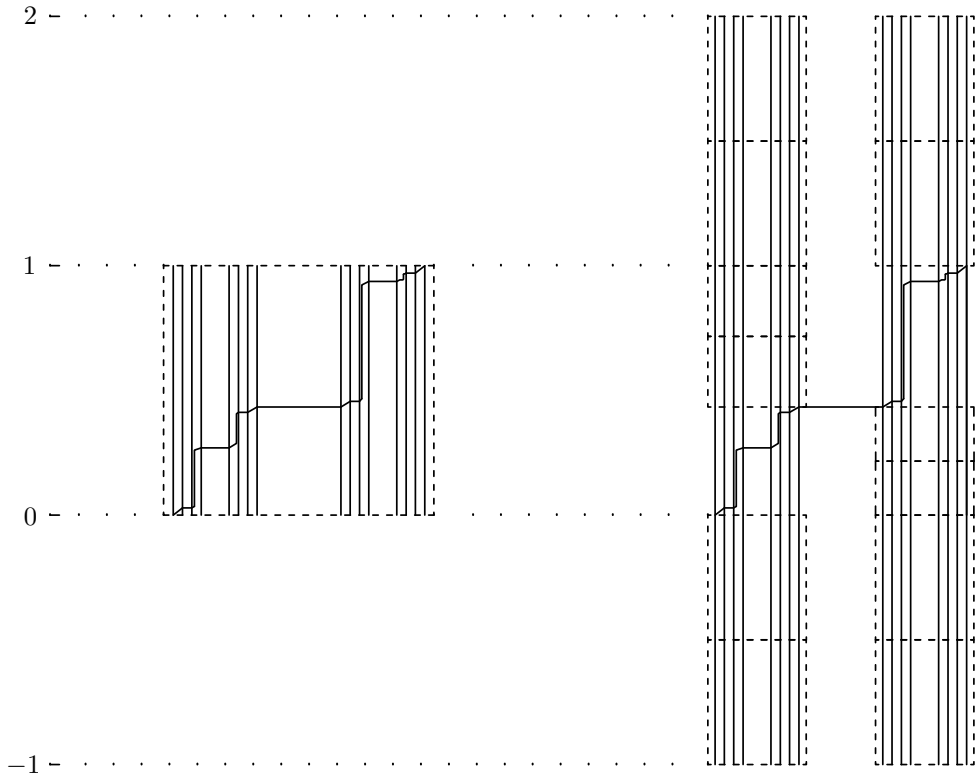}
	\caption{Jordan arc extending $D$ in  $R^0_0$ (left),  and the rectangles of $\mathcal R_1$ (right).}
	\label{366}
\end{figure}
\item[$n\geq 2$:] Assume  $\mathcal R_k$ has  been  defined for all $k<n$. 

Fix $\sigma\in 2^n$, and partially tile  $B(\sigma)\times [-n,n+1]$ in the following way. The sets 
\begin{align*}
	&\textstyle\pi_1[\{0(\sigma)\}\times \mathbb R \cap \bigcup \{T^k_i[D]:k< n\text{ and }i<|\mathcal R_k|\}];\text{ and}\\
	&\textstyle\pi_1[\{1(\sigma)\}\times \mathbb R\cap \bigcup \{T^k_i[D]:k<n\text{ and }i<|\mathcal R_k|\}] 
\end{align*}
can be enumerated $x_0,...,x_m$ and $y_0,...,y_m$, respectively,  such that $$-n+1\leq x_0<y_0<x_1<y_1<...<x_m<y_m\leq n$$  and $x_j$ and $y_j$ belong to the same $T^k_i[D]$. Note that $B(\sigma)\times [x_j,y_j]$ is contained in the closed $\frac{1}{3^n}$-neighborhood of the unique $T^k_i[D]$ it intersects. Tile each strip 
\begin{align*}
	&B(\sigma)\times [-n,x_0];\\
	&B(\sigma)\times [y_m,n+1]\text{; and} \\
	&B(\sigma)\times [y_j,x_{j+1}]\text{, }j<m, 
\end{align*}
with finitely many rectangles of the form $B(\sigma)\times [a,b]$  where  $0<b-a\leq\frac{1}{n+1}$.  
 
Repeat for each $\sigma\in 2^n$ to obtain  $\mathcal R_n$.
 
Note that conditions (i) through (iv) are satisfied.  We have also guaranteed that for all $l,m<n$, $j<|\mathcal R_l|$,  $k<|\mathcal R_m|$, and $\sigma\in 2^n$,   there exists $i<|\mathcal R_n|$ such that $T^n_i[D]$ is between $T^l_j[D]\cap B(\sigma)\times \mathbb R$ and $T^m_k[D]\cap B(\sigma)\times \mathbb R$.  Together with (iv), this implies (v).  
\end{itemize}

Now let $M=\{\langle d_n,\textstyle\frac{r_n+s_n}{2}\rangle:n<\omega\}=\{\langle d_n, f(d_n)+2^{-n-2}\rangle:n<\omega\}
$ be the set of midpoints of  vertical arcs in $D$.  Note that $M$ is discrete.  Define $$Y=C\times \mathbb R\setminus \bigcup \{ T^n_i[D\setminus M]:n<\omega \text{ and }i<|\mathcal R_n|\}.$$ In other words, $Y=P\cup Q$ where $$P=C\times \mathbb R\setminus \bigcup \{T^n_i[D]:n<\omega\text{ and }i<|\mathcal R_n|\}$$ and $$Q=\bigcup \{ T^n_i[M]:n<\omega\text{ and }i<|\mathcal R_n|\}.$$

Let $\Xi:C\times \mathbb R\to C\times (0,1)$ be the  homeomorphism  defined by $$\textstyle\Xi(\langle c,r\rangle)=\langle c,\frac{1}{\pi}\arctan(r)+\frac{1}{2}\rangle.$$  Finally,  $X=\Xi[Y].$

\begin{ucl}$X$ is hereditarily disconnected.\end{ucl}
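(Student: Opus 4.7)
The plan is to show instead that $Y$ is hereditarily disconnected, which is equivalent since $\Xi$ is a homeomorphism. Because $C$ is zero-dimensional, every connected subset of $Y\subseteq C\times\mathbb{R}$ projects to a single point of $C$ and therefore lies in one fiber $\{c\}\times\mathbb{R}$; such a subset is then an interval. Any non-degenerate connected subset of $Y$ contains a closed subinterval $A=\{c\}\times[r_1,r_2]$ with $r_1<r_2$, so it suffices to derive a contradiction from $A\subseteq Y$.

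The first step is to show $A\cap Q=\varnothing$. If $p\in A\cap Q$, then $p$ is the midpoint of some vertical arc $\{c\}\times[a,b]\subseteq T^k_i[D]$, and $p$ is strictly interior to $[a,b]$; since $A$ is an interval through $p$ containing at least one other point, $A$ must contain points of $\{c\}\times[a,b]$ distinct from $p$, which lie in $T^k_i[D\setminus M]$ and therefore not in $Y$, a contradiction. Hence $A\subseteq P$, i.e.\ $A$ is disjoint from every $T^n_i[D]$.

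Next I would fix $n$ large enough that $[r_1,r_2]\subseteq[-n,n+1]$ and $\tfrac{1}{n+1}<\tfrac{r_2-r_1}{2}$. Then $A$ is an arc in $C\times[-n,n+1]\setminus\bigcup\{T^k_i[D]:k\leq n\}$, so condition (v) produces $i<|\mathcal R_n|$, $k\leq n$, and $j<|\mathcal R_k|$ with $A\subseteq R^n_i\cup R^k_j$ and $d(A,T^k_j[D])<\tfrac{1}{n+1}+\tfrac{1}{3^n}$. Each $R^m_l$ meets $\{c\}\times\mathbb{R}$ either in $\varnothing$ or in a closed interval of length at most $\tfrac{1}{m+1}$, so the connected interval $A$ of length $r_2-r_1$ is covered by two intervals of total length at most $\tfrac{1}{n+1}+\tfrac{1}{k+1}$. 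This yields $r_2-r_1\leq\tfrac{1}{n+1}+\tfrac{1}{k+1}$, and combined with the choice of $n$ we get $k+1<\tfrac{2}{r_2-r_1}$, a bound independent of $n$.

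Since $k$ is bounded and each $\mathcal R_k$ is finite, only finitely many pairs $(k,j)$ can appear as $n$ varies. Pigeonhole furnishes a fixed $(k_0,j_0)$ occurring for arbitrarily large $n$, and letting $n\to\infty$ along that subsequence gives $d(A,T^{k_0}_{j_0}[D])=0$. Since both $A$ and $T^{k_0}_{j_0}[D]$ are compact, this forces $A\cap T^{k_0}_{j_0}[D]\neq\varnothing$, contradicting the conclusion of the second paragraph. The main obstacle is the third paragraph: one must combine condition (v) with the vertical height bounds $\tfrac{1}{m+1}$ from (ii) to bootstrap a uniform bound on $k$, after which pigeonhole and compactness close the argument.
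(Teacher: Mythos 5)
Your proof is correct and takes essentially the same route as the paper: reduce to a vertical closed interval $A\subseteq Y$ disjoint from $Q$ (hence $A\subseteq P$), apply condition (v) for all large $n$, use the height bound (ii) to force the index $k$ to stay bounded, and then conclude by pigeonhole and compactness that $A$ would have to meet some fixed $T^{k_0}_{j_0}[D]$. The paper phrases the final contradiction as the set $\{k(n):n<\omega\}$ being simultaneously finite and infinite, but that is exactly the pigeonhole-plus-compactness argument you spell out.
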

 
\begin{proof}[Proof of Claim 1]Suppose not.  Then there is a vertical arc  $I\subseteq Y$ of positive height. Let $N<\omega$ be such that $I\subseteq C\times [-N,N+1]$.  
  
Apparently, $I\cap Q=\varnothing$ because each point $\langle c,r\rangle\in Q$ is isolated in its strand $Y\cap \{c\}\times\mathbb R$. Thus $I\subseteq P$.   For each $n\geq N$ let $i(n)<|\mathcal R_n|$,  $k(n)\leq n$, and $j(n)<|\mathcal R_{k(n)}|$ be given by condition (v), so that $$I\subseteq R^n_{i(n)}\cup R^{k(n)}_{j(n)}$$ and $d(I,T^{k(n)}_{j(n)}[D])<\frac{1}{n+1}+\frac{1}{3^n}$. See Figure 3.

The total height of $R^n_{i(n)}\cup R^{k(n)}_{j(n)}$ is less than or equal to $\frac{2}{k(n)+1}$, so $\{k(n):n<\omega\}$ must be  finite.   On the other hand, $$\textstyle 0<d(I,T^{k(n)}_{j(n)}[D])<\frac{1}{n+1}+\frac{1}{3^n}$$ implies $\{k(n):n<\omega\}$ is infinite.  We have a contradiction.\end{proof}

\begin{ucl}\bf{\d{$\nabla$}}$X$ \textnormal{\textit{is connected.  }}\end{ucl}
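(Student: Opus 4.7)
My plan is a proof by contradiction.  Suppose $\textbf{\d{$\nabla$}}X$ were disconnected, with a clopen partition $\textbf{\d{$\nabla$}}X = A \sqcup B$, the apex $\langle \tfrac{1}{2},0\rangle \in A$ and $B \ne \varnothing$.  Since the sets $\nabla[X \cap (C \times (0,\epsilon))] \cup \{\langle \tfrac{1}{2},0\rangle\}$ form a neighborhood basis of the apex in $\textbf{\d{$\nabla$}}X$, the set $B$ must be contained in $\nabla[X \cap (C \times [\epsilon,1))]$ for some $\epsilon > 0$.  Transferring back through $\nabla$ and then through the homeomorphism $\Xi$, I obtain a nonempty relatively clopen $V \subseteq Y$ with $V \subseteq C \times [N,\infty)$ for some real $N$; the task reduces to showing no such $V$ exists.

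First I would extract a ``bottom'' limit point.  Setting $\rho := \inf \pi_1[V] \geq N$ and picking $\langle c_k,r_k\rangle \in V$ with $r_k \to \rho$, compactness of $C$ lets us assume $c_k \to c^*$, so $q^* := \langle c^*,\rho\rangle$ lies in the closure of $V$ inside $C \times \mathbb{R}$.  If $q^*$ were in $Y$ it would be in $V$ (as $V$ is closed in $Y$); openness of $V$ in $Y$ together with the density of $Y$ in $C \times \mathbb{R}$ --- which holds because the removed set $\bigcup T^n_i[D \setminus M]$ is a meager countable union of nowhere dense compacta --- would then provide $V$-points of second coordinate strictly below $\rho$, contradicting the infimum.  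Hence $q^* \in T^{n_0}_{i_0}[D \setminus M]$ for some indices $n_0,i_0$, and we may write $q^* = T^{n_0}_{i_0}(p^*)$ with $p^* \in D \setminus M$.

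The heart of the argument is to use the countable set $Q$ of midpoints to manufacture a contradiction.  The complement $V^c := Y \setminus V$ contains every $Y$-point of second coordinate strictly less than $\rho$, so it accumulates at $q^*$ from below; meanwhile $V$ accumulates at $q^*$ from above through the sequence $\langle c_k,r_k\rangle$.  The set $M$ is dense in the graph of $f$ because the midpoints $\langle d_j, f(d_j) + 2^{-j-2}\rangle$ converge to $\langle c, f(c)\rangle$ whenever $d_j \to c$, and this density transfers under $T^{n_0}_{i_0}$ to a density of $T^{n_0}_{i_0}[M] \subseteq Q$ along $T^{n_0}_{i_0}[D]$ near $q^*$.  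I would choose a midpoint $m \in T^{n_0}_{i_0}[M]$ whose $\pi_1$-coordinate sits just above $\rho$ (using the jump structure to guarantee this is achievable with $m$ arbitrarily close to $q^*$), and then exploit that the $Y$-neighborhood of $m$ forced by openness of whichever clopen contains $m$ must, by the accumulation of the other clopen at $q^*$ together with the smallness of $m$-to-$q^*$ distance, intersect that other clopen set, giving the contradiction.

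The main obstacle is this final step.  Depending on whether $p^*$ is a continuity point of $f$ on the graph, a jump point $d_n$, or an interior point of a vertical arc of $D$ different from a midpoint, different manipulations are required; the first case is handled directly by the comparison between $f(c_0) - f(d_j)$ and $2^{-j-2}$ (with $c_0 = \pi_0(p^*)$) to ensure midpoint heights straddling $\rho$.  The vertical-arc interior case is the most delicate, since $T^{n_0}_{i_0}[M]$ does not cluster at $q^*$ there; in this case one invokes condition (v) of the construction to exhibit a nearby tile $T^{n_1}_{i_1}[D]$ whose graph passes within distance $\tfrac{1}{n+1}+\tfrac{1}{3^n}$ of $q^*$ and restarts the midpoint argument at a graph point of that tile.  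Executing this local analysis cleanly is where the bulk of the work lies.
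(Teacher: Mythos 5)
Your reduction is fine: since the sets $\nabla[C\times[0,\epsilon)]$ form a neighborhood base at the vertex of the fan, a disconnection of \textbf{\d{$\nabla$}}$X$ does produce a nonempty relatively clopen $V\subseteq Y$ with $\rho:=\inf\pi_1[V]$ finite, and your argument that the bottom limit point $q^*$ cannot lie in $Y$ (hence $q^*\in T^{n_0}_{i_0}[D\setminus M]$) is correct. The genuine gap is the final step, and it is not just bookkeeping. From the single point $q^*$ you cannot force any midpoint $m\in Q$ into $\overline V\cap\overline{Y\setminus V}$: openness of whichever clopen piece contains $m$ only provides \emph{some} box about $m$ whose $Y$-points lie in that piece, and the size of that box is not under your control --- it can be far smaller than $\varrho(m,q^*)$, so ``$m$ is close to $q^*$'' never forces that box to meet the other piece. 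Worse, the construction deliberately interpolates infinitely many tiles $T^k_j[D]$ between any two (that is exactly what makes $X$ hereditarily disconnected), so membership in $V$ versus $Y\setminus V$ can flip on arbitrarily small scales as soon as you move sideways off $q^*$. Knowing that $V$ accumulates at $q^*$ from above and $Y\setminus V$ from below tells you nothing about which classes the $Y$-points just above and just below $T^{n_0}_{i_0}[D]$ near the \emph{column of $m$} belong to, and it is precisely that two-sided information at $m$ that the midpoint trick requires. Your only uniform information --- every $Y$-point below height $\rho$ lies in $Y\setminus V$ --- is uniform in the wrong direction: a midpoint with height just above $\rho$ is a two-sided limit of above-the-tile and below-the-tile points whose heights are also above $\rho$, so neither side is controlled.

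What is missing is a localization step giving uniformity over a whole open set of columns, and this is where the paper's proof does its real work. There, $\overline A\cap\overline B$ misses $Y$, hence is contained in the countable union of compacta $T^n_i[D\setminus M]$; the Baire Category Theorem then yields a box $U\times(r,s)$ in which the trace of $\overline A\cap\overline B$ over columns outside $\pi_0[Q]$ is nonempty and lies inside a \emph{single} $T^n_i[D]$. Using connectedness of the vertical segments over such columns, one concludes that throughout the box the region above $T^n_i[D]$ lies in $\overline A\setminus\overline B$ and the region below lies in $\overline B\setminus\overline A$ (or vice versa); only then is the midpoint of a vertical arc of $T^n_i[D]$ inside the box a limit of both classes, i.e.\ a point of $A\cap B$. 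You should either import this Baire localization into your setting (it applies verbatim with $A=V$, $B=Y\setminus V$, and then your infimum point is not even needed), or supply a substitute argument producing an open box of columns on which the above/below dichotomy is uniform. As written, the proposal stops exactly where the content of the claim begins, and the appeal to condition (v) in the vertical-arc case suffers from the same defect.
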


\begin{proof}[Proof of Claim 2]Note that $\pi_0[Q]$ is countable, as is $\{c\}\times \mathbb R\setminus Y$ for every $c\in C\setminus \pi_0[Q]$. In particular, $\overline{Y\cap\{c\}\times \mathbb R}=\{c\}\times\mathbb R$ for each $c\in C\setminus \pi_0[Q]$, and $$\overline{Y\cap (C\setminus \pi_0[Q])\times\mathbb R}=C\times\mathbb R.$$

Supposing $\textbf{\d{$\nabla$}}X$ is not connected, there exists a clopen partition $\{A,B\}$ of $Y$ and a point $c\in C\setminus \pi_0[Q]$  such that $A\cap \{c\}\times \mathbb R$ and $B\cap \{c\}\times \mathbb R$ are non-empty. Since $\{c\}\times \mathbb R\subseteq\overline A\cup \overline B$, we have  $\overline A\cap \overline B\cap \{c\}\times \mathbb R\neq\varnothing$. Applying the Baire Category Theorem in  $(C\setminus \pi_0[Q])\times \mathbb R\cap \overline A\cap \overline B$, we find there exists $n,i<\omega$ and a $C\times \mathbb R$-open set $U\times (r,s)$ such that $$\varnothing \neq  (U\setminus \pi_0[Q])\times (r,s)\cap \overline A\cap \overline B\subseteq T^n_i[D].$$ Let $\langle c',r'\rangle$ be an element of this intersection. 
   
The interval $\{c'\}\times (r',s)$  is contained in either $\overline A\setminus \overline B$ or $\overline B\setminus \overline A$, and likewise for  $\{c'\}\times (r,r')$.   But  $\{c'\}\times [(r,r')\cup (r',s)]$ is contained in neither  $\overline A$ nor $\overline B$. Without loss of generality, $\{c'\}\times (r',s)\subseteq \overline A\setminus \overline B$ and  $\{c'\}\times (r,r')\subseteq \overline B\setminus \overline A$.  
   
In $(U\setminus \pi_0[Q])\times (r,s)$, all intervals above $T^n_i[D]$ with $C$-coordinates sufficiently close to $c'$ belong to $\overline A\setminus \overline B$, and all lower segments eventually belong to $\overline B\setminus \overline A$. But there exists $m<\omega$ such that $\pi_0(T^n_i(\langle d_m,0\rangle))$ is very close to $c'$ and $T^n_i[\{d_m\}\times [r_m,s_m]]\subseteq U\times (r,s)$.  See Figure 4. We have $T^n_i(\langle d_m,\frac{r_m+s_m}{2}\rangle)\in A\cap B$, a contradiction. \end{proof}
  
 \begin{figure}[h]
	\centering
	\includegraphics[scale=0.5]{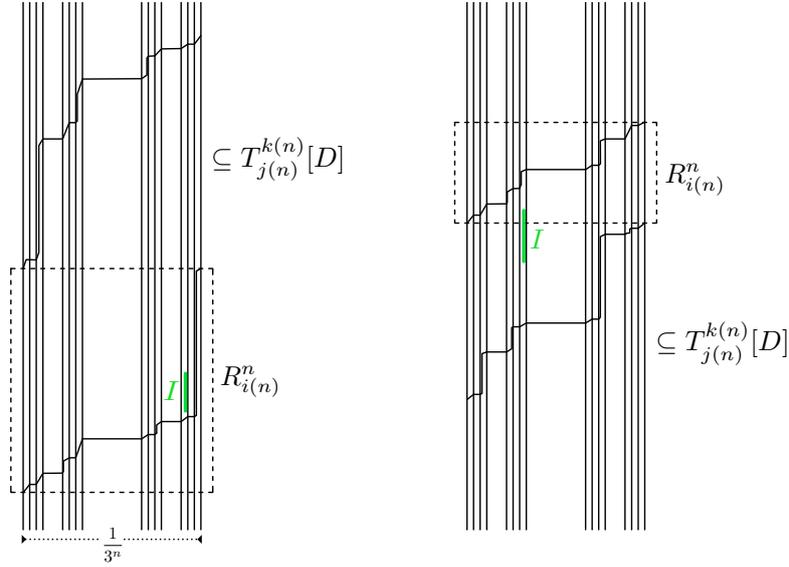} 
	\caption{Possible locations of $I$.}  
	\label{fig:prob1_6_1}
\end{figure}

\begin{figure}[h]
	\centering
	\hspace{1cm}\includegraphics[scale=0.3]{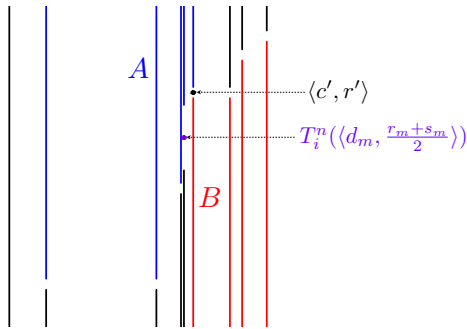}
	\caption{Proving $\textbf{\d{$\nabla$}}X$ is connected}
\end{figure}

\begin{ucl}$Y$ is rim-discrete. \end{ucl}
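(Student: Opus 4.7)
The plan is to build a basis for $Y$ consisting of ``sandwich" regions between pairs of arcs $T^{n_1}_{i_1}[D]$ and $T^{n_2}_{i_2}[D]$ from the construction. Given $\sigma\in 2^{<\omega}$ with $B(\sigma)\subseteq B(\sigma^{n_1}_{i_1})\cap B(\sigma^{n_2}_{i_2})$, and assuming $\alpha:=T^{n_1}_{i_1}[D]$ lies strictly below $\beta:=T^{n_2}_{i_2}[D]$ over $B(\sigma)$, I would work with
\[
W_{\sigma,\alpha,\beta} := \bigl\{\langle c,r\rangle\in B(\sigma)\times\mathbb R : r_\alpha(c)<r<r_\beta(c)\bigr\},
\]
where $r_\alpha(c)=\max\{r:\langle c,r\rangle\in\alpha\}$ and $r_\beta(c)=\min\{r:\langle c,r\rangle\in\beta\}$.

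First I would verify that $W_{\sigma,\alpha,\beta}$ is open. Since $f$ is strictly increasing and $D$ is the graph of $f$ together with the vertical arcs at the jump points $d_k$, the function $r_\alpha$ is upper semicontinuous on $B(\sigma)$ and $r_\beta$ is lower semicontinuous; this gives openness. The topological boundary $\partial W$ in $C\times\mathbb R$ is exactly $\{\langle c,r_\alpha(c)\rangle:c\in B(\sigma)\}\cup\{\langle c,r_\beta(c)\rangle:c\in B(\sigma)\}$, a pair of staircase curves contained in $\alpha\cup\beta$.

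Next I would establish the crucial identity $\partial W\cap Y=\varnothing$. On $\alpha$ the $Y$-points are exactly $T^{n_1}_{i_1}[M]$, each the \emph{midpoint} of a vertical arc. The upper trace $\{\langle c,r_\alpha(c)\rangle:c\in B(\sigma)\}$ consists of the graph of $T^{n_1}_{i_1}\circ f\circ (T^{n_1}_{i_1})^{-1}$ together with the \emph{tops} of vertical arcs, and therefore contains no midpoint; the analogous statement holds for the lower trace on $\beta$. By condition (iii), no other $T^m_j[D]$ meets $\alpha\cup\beta$, so foreign midpoints cannot appear either. Hence $\partial W\cap Y=\varnothing$, so $W\cap Y$ is clopen in $Y$ with empty (discrete) boundary. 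For $y\in P$ these sandwiches form a local basis: given $\epsilon>0$, I choose $\sigma$ with $\pi_0(y)\in B(\sigma)$ and $\mathrm{diam}(B(\sigma))<\epsilon/2$, then use condition (iv) (density of arc heights) together with the height bound $b^n_i-a^n_i\leq 1/(n+1)$ to select $\alpha$ and $\beta$ with $b^{n_1}_{i_1}$ slightly below $\pi_1(y)$ and $a^{n_2}_{i_2}$ slightly above, obtaining a sandwich of diameter less than $\epsilon$ containing $y$.

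The main obstacle is the case $y\in Q$. Here $y=T^n_i(\langle d_m,(r_m+s_m)/2\rangle)$ is the midpoint of a vertical arc of $T^n_i[D]$ of fixed positive length $h$, and condition (iii) forces any sandwich containing $y$ to enclose the entire vertical arc, yielding neighborhoods of diameter at least $h$. To produce basic neighborhoods of $y$ of arbitrarily small diameter, I would intersect a sandwich $W$ with a thin slab $B(\sigma')\times(\pi_1(y)-\delta,\pi_1(y)+\delta)$ for $\delta<h/2$, and argue that the resulting boundary lies in a controlled finite union of arc segments meeting $Y$ only in finitely many midpoints. Making this rigorous is the technical heart of the proof: it likely requires choosing the cutting heights $\pi_1(y)\pm\delta$ so that each horizontal line coincides with the top of some fine-scale $T^m_j[D]$, forcing the potentially problematic horizontal boundary to collapse onto the discrete set of midpoints of those arcs, using conditions (iii) and (iv) together with the density of $\{d_n\}$ in $C$.
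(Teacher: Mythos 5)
Your Case 1 is essentially the paper's Case 1 (sandwich a point of $P$ between two copies $T^{n_1}_{i_1}[D]$ and $T^{n_2}_{i_2}[D]$ over a small clopen base), but one assertion in it is wrong: the boundary of your sandwich $W$ is not just the two ``traces,'' and $\partial W\cap Y\neq\varnothing$ in general. If $e$ is the image of a jump point $d_k$ of the floor copy $\alpha$, then every point of the vertical arc of $\alpha$ over $e$ is a limit of points of $W$ coming from strands $c'\to e^-$ (there $r_\alpha(c')\to$ the \emph{bottom} of the arc), while no such point lies in $W$; in particular the midpoint, which belongs to $Q\subseteq Y$, is in $\partial W\cap Y$. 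So $W\cap Y$ is not clopen in $Y$. This does not sink Case 1 --- the relative boundary is still contained in the discrete set $T^{n_1}_{i_1}[M]\cup T^{n_2}_{i_2}[M]$, which is exactly what the paper uses --- but the ``empty boundary'' claim should be replaced by ``discrete boundary.'' (Also, $f$ is only non-decreasing, not strictly increasing: endpoints of a complementary gap of $C$ get equal values; this is immaterial here.)

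The genuine gap is Case 2, which you explicitly leave open, and the device you sketch there cannot work. No horizontal cut is available: each copy $T^m_j[D]$ spans a rectangle of positive height and its upper trace increases across its base, so a horizontal segment of boundary can never be made to ``coincide with the top of some fine-scale $T^m_j[D]$''; worse, any nondegenerate horizontal piece of boundary lying off the arcs meets $P$ (hence $Y$) in an uncountable dense subset of that segment, since a horizontal line meets $\bigcup T^n_i[D]$ in only countably many points --- so such a neighborhood has non-discrete boundary. The paper's Case 2 avoids horizontal cuts entirely: it first chooses $[a,b]\subseteq V$ inside the deleted vertical arc so that $X\cap\{c\}\times[a,b]=\{\langle c,r\rangle\}$ (vertical smallness is then automatic), shows the exceptional sets $F_0,F_1$ of strands admitting no floor or ceiling inside $U\times(a,\frac{a+r}{2})$ and $U\times(\frac{r+b}{2},b)$ are finite, and then telescopes: it takes clopen sets $A_0\supseteq A_1\supseteq\cdots$ with $\bigcap A_k=\{c\}$ and, over each annulus $A_k\setminus A_{k+1}$, uses only finitely many floor arcs and ceiling arcs, defining $W=(W_0\cap W_1)\cup\{\langle c,r\rangle\}$ whose boundary consists only of midpoints of those arcs and is discrete. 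This annulus construction (together with the $[a,b]$ gap trick and the finiteness of $F_0,F_1$) is the missing idea, and it is precisely the technical heart your proposal defers, so the proof as proposed is incomplete.
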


\begin{proof}[Proof of Claim 3]  Let $U\times V$ be any $C\times \mathbb R$-neighborhood of a point $\langle c,r\rangle\in Y$. 

\underline{Case 1:}   $c\notin \pi_0[Q]$. By Claim 1,  $\langle c,r\rangle$ is the limit of increasing and decreasing sequences of points in $\{c\}\times\mathbb R$ from infinitely many $T^n_i[D]$'s.  So there exist $n,m,i,j<\omega$ such that $R^n_i\cup R^m_j\subseteq U\times V$, $c\in B(\sigma^n_i)\cap B(\sigma^m_j)$, and $b^n_i<r<a^m_j$. Let $W$ be the set of points in $X$ between $T^n_i[D]$ and $T^m_j[D]$. Then $W$ is an open subset of $X$, and $\langle c,r\rangle\in W\subseteq U\times V$.  Finally, $\partial _Y W$ is discrete because it is contained in the discrete set $T^n_i[M]\cup T^m_j[M]$.

\underline{Case 2:} $c\in \pi_0[Q]$.  There   is an interval $[a,b]\subseteq V$ such that  $a<r<b$ and $$X\cap \{c\}\times [a,b]=\{\langle c,r\rangle\}.$$  For in this case there are integers $n,i,m<\omega$ such that $\langle c,r\rangle =T^n_i(\langle d_m,\frac{r_m+s_m}{2}\rangle)$. Assume $V$ is a closed interval, and let  $$[a,b]=V\cap [\pi_1(T^n_i(\langle d_m,r_m\rangle )),\pi_1(T^n_i(\langle d_m,s_m\rangle))].$$  

Let $N$ be an integer such that $b^n_i-a^n_i<\frac{r-a}{2}$ whenever $n\geq N$.   Only finitely many vertical arcs in $D$ have height at least $\frac{r-a}{2}$, and there are only finitely many $T^k_i[D]$'s with $k<N$. It follows that $F_0:=\{e\in C:\{e\}\times (a,\textstyle\frac{r+a}{2})\cap X\subseteq Q\}$ is finite. For if $e\in F_0$ then  $\{e\}\times (a,\frac{r+a}{2})$ is covered by a countable collection of pairwise disjoint arcs and singletons. So $\{e\}\times (a,\frac{r+a}{2})$ must be contained in one of the arcs; a single vertical arc of some $T^k_i[D]$ with $k<N$.  

Likewise,   $F_1:=\{e\in C:\{e\}\times (\textstyle\frac{r+b}{2},b)\cap X\subseteq Q\}$ is finite.

Let $A_0\supseteq A_1\supseteq  ...$ be a decreasing sequence of $C$-clopen sets such that $A_0\subseteq U\setminus [(F_0\cup F_1)\setminus \{c\}]$ and  $\{c\}=\bigcap \{A_k:k<\omega\}$.  Let $$\textstyle\Pi^0:=\{\langle n,i\rangle:B(\sigma^n_i)\times [a^{n}_i,b^{n}_i]\subseteq U\times (a,\frac{a+r}{2})\}.$$ By the case $c\notin \pi_0[Q]$ we have $A_0\setminus \{c\}\subseteq \bigcup \{B(\sigma^n_i):\langle n,i\rangle\in \Pi^0\}$.  Moreover, for each $k<\omega$ there is a finite $\Pi_k^0\subseteq\Pi^0$ such that $A_{k}\setminus A_{k+1}\subseteq \bigcup \{B(\sigma^n_i):\langle n,i\rangle\in \Pi^0_k\}$. Let 
\begin{align*}
W_0=\Big\{\langle e,s\rangle \in X:\big(\exists k<\omega\text{ and } \langle n,i\rangle\in \Pi^0_k\big)\big(e\in &B(\sigma^n_i)\cap A_{k}\setminus A_{k+1}\text{ and } \\
&s>\max\pi_1\big[\{e\}\times \mathbb R\cap T^{n}_{i}[D]\big]\big)\Big\}.       
\end{align*}

We may define $\Pi^1$ and finite $\Pi^1_k$'s similarly for  $U\times (\frac{r+b}{2},b)$, and then let 
	\begin{align*}
	W_1=\Big\{\langle e,s\rangle \in X:\big(\exists k<\omega\text{ and }\langle n,i\rangle\in \Pi^1_k\big)\big(e\in &B(\sigma^n_i)\cap A_{k}\setminus A_{k+1}\text{ and }\\
	&s<\min\pi_1\big[\{e\}\times \mathbb R\cap T^{n}_{i}[D]\big]\big)\Big\}.
	\end{align*}

Observe that $W:=(W_0\cap W_1)\cup \{\langle c,r\rangle\}$ is an $X$-open subset of $U\times V$ and $\partial _Y W$  is discrete.\end{proof}

\begin{ucl}\bf{\d{$\nabla$}}$X$ \textnormal{\textit{is rim-discrete.}} \end{ucl}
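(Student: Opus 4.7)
\noindent The plan is to treat the apex $p=\langle \tfrac{1}{2},0\rangle$ separately from the rest of $\textbf{\d{$\nabla$}}X$. For a point $q\in \nabla[X]=\textbf{\d{$\nabla$}}X\setminus\{p\}$, the map $\nabla\circ\Xi\colon Y\to\nabla[X]$ is a homeomorphism and $\nabla[X]$ is open in $\textbf{\d{$\nabla$}}X$, so Claim 3 furnishes a basic neighborhood $W\subseteq\nabla[X]$ of $q$ whose boundary in $\nabla[X]$ is discrete. After intersecting $W$ with a small Euclidean ball around $q$ in the fan, the fan-closure of $W$ no longer contains $p$, so the boundary of $W$ in $\textbf{\d{$\nabla$}}X$ coincides with its boundary in $\nabla[X]$ and is discrete. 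This handles every point other than $p$.

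\noindent The essential work is constructing a local basis at $p$. Fix $N<\omega$. For each $\sigma\in 2^N$ let $i(\sigma)<|\mathcal R_N|$ denote the index of the \emph{lowest} tile in column $B(\sigma)$ at stage $N$, so $\sigma^N_{i(\sigma)}=\sigma$ and $a^N_{i(\sigma)}=-N$; by (ii), $b^N_{i(\sigma)}\le -N+\tfrac{1}{N+1}$. Let $A^N_\sigma\subseteq B(\sigma)\times\mathbb R$ be the union of $B(\sigma)\times(-\infty,-N)$ with the $T^N_{i(\sigma)}$-image of the ``below $D$'' open set $\{\langle c,r\rangle\in C\times[0,1]:r<f(c)\}$; this is open in $B(\sigma)\times\mathbb R$ with topological boundary equal to $T^N_{i(\sigma)}[D]$. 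Set
\[
W_N=Y\cap\bigcup_{\sigma\in 2^N}A^N_\sigma,\qquad V_N=\{p\}\cup\nabla[\Xi[W_N]],
\]
and propose $\{V_N:N<\omega\}$ as the basis at $p$. Because the $B(\sigma)$ are pairwise disjoint across $\sigma\in 2^N$, the $A^N_\sigma$ are pairwise disjoint. The inclusions $Y\cap C\times(-\infty,-N)\subseteq W_N\subseteq C\times(-\infty,-N+\tfrac{1}{N+1}]$ imply that $V_N$ is open in $\textbf{\d{$\nabla$}}X$ with $p$ interior, and that $\{V_N\}$ collapses to $\{p\}$ in the fan as $N\to\infty$.

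\noindent The crux is showing $\partial_YW_N$ is discrete. Disjointness of the $A^N_\sigma$ and the computation of their topological boundaries yield $\partial_YW_N\subseteq Y\cap\bigcup_\sigma T^N_{i(\sigma)}[D]$. Condition (iii) guarantees $T^n_j[D]\cap T^N_{i(\sigma)}[D]=\varnothing$ for every $(n,j)\ne (N,i(\sigma))$; hence $P$ is disjoint from $T^N_{i(\sigma)}[D]$ and $Q\cap T^N_{i(\sigma)}[D]=T^N_{i(\sigma)}[M]$, so $Y\cap T^N_{i(\sigma)}[D]=T^N_{i(\sigma)}[M]$ is discrete. A finite union of discrete sets sitting in pairwise disjoint columns is discrete, so $\partial_YW_N$ is discrete; transferring through the homeomorphism $\nabla\circ\Xi$, the boundary of $V_N$ in $\textbf{\d{$\nabla$}}X$ is discrete as well. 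The main obstacle circumvented here is that a naive ``horizontal'' neighborhood of the form $\{p\}\cup\nabla[X\cap C\times(0,\epsilon)]$ has boundary essentially $Y$ intersected with a horizontal line, which is $C$ with only countably many points removed and hence not discrete. Cutting instead along the arcs $T^N_{i(\sigma)}[D]$ works precisely because $Y$ was engineered to meet each such arc only in the discrete midpoint set $T^N_{i(\sigma)}[M]$.
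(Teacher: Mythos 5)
Your proposal is correct and takes essentially the same approach as the paper: away from the vertex $\langle\tfrac12,0\rangle$ Claim 3 is invoked directly, and at the vertex you build shrinking open sets bounded by finitely many low-lying copies $T^n_i[D]$, so that the boundary meets $Y$ only in the discrete midpoint sets $T^n_i[M]$ --- exactly the paper's construction, which takes a finite collection of columns $B(\sigma^n_i)$ covering $C$ with rectangles below a prescribed height and the points of $Y$ below the corresponding $T^n_i[D]$'s, while you take the lowest stage-$N$ tile in each column of $2^N$ together with the half-strips below $-N$. The only slip is in the routine non-vertex case: intersecting the Claim 3 neighborhood with a Euclidean ball afterwards could introduce non-discrete boundary points on the sphere, so one should instead choose $W$ from the outset inside a basic neighborhood of bounded height (Claim 3 provides a basis, so this is immediate), which already forces its closure to miss the vertex.
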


\begin{proof}[Proof of Claim 4] By Claim 3 we only need to show \textbf{\d{$\nabla$}}$X$ has a basis of open sets with discrete boundaries at the point  $\langle \frac{1}{2},0\rangle$. To that end, let $\varepsilon\in (0,1)$. We will show there is a $\textbf{\d{$\nabla$}}X$-open set $\textbf{\d{$\nabla$}}U$ such that $\langle \frac{1}{2},0\rangle\in \textbf{\d{$\nabla$}}U\subseteq \textbf{\d{$\nabla$}}[C\times (0,\varepsilon)]$ and $\partial _{\textbf{\d{$\nabla$}}X} \textbf{\d{$\nabla$}}U$ is discrete. 

Let $r=\tan(\pi(\varepsilon-\frac{1}{2}))$. The arguments in Claim 3 show there is a finite collection of $B(\sigma^n_i)$'s which cover $C$ and satisfy   $B(\sigma^n_i)\times [a^n_i,b^n_i]\subseteq C\times (-\infty, r)$.   Let $W$ be the set of all points in $Y$ below the corresponding $T^n_i[D]$'s, and put $U=\Xi[W]$.  Then  $\textbf{\d{$\nabla$}}U$ is as desired.  \end{proof}

Note that $D\setminus M$ is $\sigma$-compact, so $Y$ is a $G_\delta$-subset of $C\times \mathbb R$. Hence both $X$ and $\textbf{\d{$\nabla$}} X$ are completely metrizable. 

This concludes Example 1.\end{ue}

\begin{ue}There is a rim-discrete widely-connected $G_\delta$-subset of the plane.

Let $X$ be as defined in Example 1.  In the bucket-handle continuum $K$ there is a Cantor set $\Delta$ such that $K\setminus \Delta$  is the union of $\omega$-many pairwise disjoint open sets $K_i\simeq C\times (0,1)$.   For each $i<\omega$,  let $\varphi_i:C\times(0,1)\to K_i$ be a homeomorphism, and put $X_\omega=\bigcup \{\varphi_i[X]:i<\omega\}$. Essentially,  $X_\omega\cup \Delta$ is the set $W[X]$ from \cite{lip}.  By the particular construction in \cite{lip}, we may assume $(K_i)$ is a null-sequence; this will be important later in Example 4.

Recall $X$ is a dense hereditarily disconnected subset of  $C\times(0,1)$, and $\textbf{\d{$\nabla$}} X$ is connected. By  \cite[Proposition 1]{lip}, $X_\omega\cup \Delta$ widely-connected.  By the closing remark in Example 1 and \cite[Proposition 6]{lip}, $X_\omega\cup \Delta$ is completely metrizable.  And by treating each point of $\Delta$  like  $\langle \frac{1}{2},0\rangle$ in  Claim 4,  we  see that $X_\omega\cup \Delta$  is rim-discrete. 
\end{ue}

\section{Suslinian compactifications}

Here we define Suslinian upper semi-continuous decompositions of the Cantor fan and  Knaster bucket-handle.  The respective continua will homeomorphically contain \textnormal{\textbf{\d{$\nabla$}}}$X$ and $X_\omega\cup \Delta$.

\begin{ue}There is a Suslinian continuum $\mathcal F\supseteq \textnormal{\textbf{\d{$\nabla$}}}X$.

By Claim 3 of Example 1,  there is a null-sequence $D_0,D_1,...\subseteq \nabla(C\times(0,1))$ of disjoint copies of $D$ such that $\nabla(C\times(0,1))\setminus \bigcup \{D_n:n<\omega\}$ is zero-dimensional, and $\nabla X\cap \bigcup \{D_n:n<\omega\}$ is  a countable point set (the set of all midpoints of arcs in the $D_n$'s). 

Let $f$ be the D\k{e}bski function from the beginning of Example 1.  Observe that  $$E:=\{\langle c,f(c)\rangle:c\in C\}\cup \{\langle d_n,s_n\rangle:n<\omega\}$$ is the $C\times[0,1]$-closure of the graph of $f$;  $E=\overline{\{\langle c,f(c)\rangle:c\in C\}}$. Thus, $E$ is compact. 

Let $F$ denote the the Cantor fan $\nabla(C\times[0,1])$.

For each $n<\omega$ let $E_n\subseteq D_n$ be the natural copy of $E$ in $D_n$, so that $E_n\cap \textnormal{\textbf{\d{$\nabla$}}}X=\varnothing$ for each $n<\omega$. Identify each $E_n$ with the singleton $e_n:=\{E_n\}$, and put $\mathcal F=(F\setminus \bigcup \{E_n:n<\omega\})\cup \{e_n:n<\omega\}$.  Since $(E_n)$ is a null-sequence of compact sets, $\mathcal F$ is an upper semi-continuous decomposition of $F$. Therefore $\mathcal F$ is a continuum.  Additionally, $\mathcal F$  contains \textnormal{\textbf{\d{$\nabla$}}}$X$. It remains to show $\mathcal F$ is Suslinian. 
 
For each $n<\omega$, let  $\mathcal D_n=(D_n\setminus E_n)\cup \{e_n\}\subseteq \mathcal F$ denote the quotient of $D_n$ obtained by shrinking $E_n$ to $e_n$.  We note that $\mathcal D_n$ is (homeomorphic to) a Hawaiian earring.

\begin{ucl}Each subcontinuum of $\mathcal F\setminus\{e_n:n<\omega\}$ is   an arc  in some $\mathcal D_m$. \end{ucl}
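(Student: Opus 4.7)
Writing $\mathcal{F} \setminus \{e_n : n < \omega\}$ as $F \setminus \bigcup_{n < \omega} E_n$, the task is to show every subcontinuum $K$ of $F$ with $K \cap E_n = \varnothing$ for all $n$ is an arc inside some $\mathcal{D}_m$. My plan is in two main steps.

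First I would rule out the apex $\langle \frac{1}{2}, 0 \rangle$ lying in $K$. Since the Cantor fan $F$ is a dendroid (removing any non-apex point splits $F$ into exactly two components), a non-degenerate continuum $K$ containing the apex must contain the unique arc joining the apex to any other of its points; in particular, $K$ contains a non-trivial arc $A = \nabla(\{c\} \times [0, h])$ with $h > 0$. The punctured arc $A \setminus \{\langle \frac{1}{2}, 0 \rangle\}$ is a non-degenerate connected subset of $\nabla(C \times (0, 1))$, so by the zero-dimensionality of $\nabla(C \times (0, 1)) \setminus \bigcup_n D_n$ noted at the start of this example, $A$ must meet some $D_n$. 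A brief case analysis on how that $D_n$ meets the strand of $A$ --- either in a single point (necessarily a graph point, so lying in $E_n$), or in a closed vertical sub-arc whose lower endpoint lies in $E_n$ and at a height at most $h$, hence inside $A$ --- forces $A \cap E_n \neq \varnothing$, contradicting $A \subseteq K \subseteq F \setminus \bigcup_n E_n$.

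With the apex excluded, $K$ lies in a single connected component of $F \setminus \{\langle \frac{1}{2}, 0 \rangle\}$, i.e.\ in a single open strand $S = \nabla(\{c\} \times (0, 1])$. Since $K$ is a subcontinuum of the arc $S$, it is a closed sub-arc. I would then decompose $S \cap (F \setminus \bigcup_n E_n)$ into two disjoint types of pieces: (i) the open vertical arcs $S \cap (D_n \setminus E_n)$, each of the form $T^{n'}_i[\{d_k\} \times (r_k, s_k)]$ arising when $c$ corresponds under $T^{n'}_i$ to a jump point $d_k$ of $f$; and (ii) $S \setminus \bigcup_n D_n$, which is a subspace of the zero-dimensional set $\nabla(C \times (0, 1)) \setminus \bigcup_n D_n$ together with the isolated endpoint $\nabla(c, 1)$, hence zero-dimensional and therefore hereditarily disconnected. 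Only type (i) contains non-degenerate connected sets, so $K$ sits inside a single type-(i) open arc. That arc is contained in $D_m \setminus E_m \subseteq \mathcal{D}_m$, and $K$, being a compact sub-arc of it, traces out an arc on one of the loops of the Hawaiian earring $\mathcal{D}_m$ that avoids the point $e_m$ --- precisely an arc in $\mathcal{D}_m$.

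The main obstacle is Step 1: producing the arc $A$ emanating from the apex inside $K$ (via the dendroid/fan structure) and then confirming that $A$ meets not merely some $D_n$ but actually some $E_n$. Step 2 reduces to a clean structural decomposition once the apex has been removed from consideration.
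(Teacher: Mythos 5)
Your route is valid and genuinely different from the paper's. The paper works inside the quotient $\mathcal F$: it picks a point of the subcontinuum on a loop $\mathcal J$ of some $\mathcal D_m$, surrounds the corresponding jump $J$ by nearby copies $D^0_n,D^1_n$ taken from the tilings $\mathcal R_{k+1+n}$, and shows the boundaries of the resulting regions $\mathcal G_n$ eventually lie in a neighborhood of $e_m$ missed by the subcontinuum, so the subcontinuum is squeezed into $\bigcap\{\mathcal G_n:n<\omega\}=\mathcal J\setminus\{e_m\}$. You instead pull the subcontinuum back to the fan $F$ and exploit the fan's special structure: every subcontinuum missing the vertex is a subarc of a single strand, after which everything is order combinatorics in an arc. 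This is shorter and more elementary, though fan-specific (the paper's squeezing argument is the one that adapts to Example 4, where the ambient continuum is the bucket-handle). Two small points: you should say a word on why subcontinua of $\mathcal F\setminus\{e_n:n<\omega\}$ correspond to subcontinua of $F$ missing every $E_n$ --- since the decomposition is upper semi-continuous and $\mathcal S$ avoids every $e_n$, the preimage $q^{-1}[\mathcal S]$ is compact, misses every $E_n$, and $q$ restricted to it is a homeomorphism; and in Step 2 you do not need the (doubtful) claim that the top point of the strand is isolated: a nondegenerate connected subset of a strand is a full interval, so it cannot lie in the zero-dimensional part plus one point, and once it meets the closed jump of some $D_m$ while avoiding the jump's endpoints (which lie in $E_m$), interval convexity already places it inside the open jump.

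There is, however, one incorrect justification to repair in Step 1: it is false that removing a non-vertex point splits the Cantor fan into two components. If $x=\nabla(\langle c,t\rangle)$ with $t>0$, then since $c$ is a limit of other points of $C$, every $F$-neighborhood of a point of $\nabla[\{c\}\times(t,1]]$ meets other strands; hence that upper piece is not open in $F\setminus\{x\}$, and $F\setminus\{x\}$ is in fact connected. Fortunately the fact you actually use --- a subcontinuum $K$ containing $\langle\frac{1}{2},0\rangle$ contains the whole segment below any of its points --- is true. Justify it either by citing that the Cantor fan is a smooth fan, hence a dendroid (arcwise connected and hereditarily unicoherent, so any subcontinuum containing two points contains the arc between them), or directly: if $\nabla(\langle c,t_0\rangle)\notin K$ but $\nabla(\langle c,t_1\rangle)\in K$ with $0<t_0<t_1$, choose a clopen $B(\sigma)\ni c$ and small $\delta>0$ with $\nabla[B(\sigma)\times(t_0-\delta,t_0+\delta)]\cap K=\varnothing$; then $K\cap \nabla[B(\sigma)\times(t_0,1]]$ is clopen in $K$, nonempty, and omits the vertex, contradicting connectedness of $K$. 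With that repair (and the sentence on the quotient pullback) your argument is complete.
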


\begin{proof}[Proof of Claim 5]Let $\mathcal S\subseteq \mathcal F\setminus\{e_n:n<\omega\}$ be a continuum. 

Since $\mathcal F\setminus \bigcup \{\mathcal D_n:n<\omega\}$ is zero-dimensional, there exists $m<\omega$ such that $S\cap \mathcal D_m\neq \varnothing$.   This alternatively follows from the fact that $X$ is punctiform. 

Let $s\in \mathcal S\cap \mathcal D_m$.  Then there is a simple closed curve $\mathcal J\subseteq \mathcal D_m$ such that $s\in \mathcal J\setminus \{e_m\}$.  Note that  $\mathcal J$ corresponds to a vertical jump $J\subseteq D_m$.
 
Let $k<\omega$ be the unique integer such that $(\nabla\circ \Xi)^{-1}(D_m)$ spans a rectangle from  $\mathcal R_k$. For each $n<\omega$ let $i(n),j(n)< |\mathcal R_{k+1+n}|$ be such that:
	\begin{align*}
  		&R^{k+1+n}_{i(n)}\text{ is the lowest member of }\mathcal R_{k+1+n}\text{ above }J\text{, and}\\
		&R^{k+1+n}_{j(n)}\text{ is the highest member of }\mathcal R_{k+1+n}\text{ below }J. 
	\end{align*}
 
Let $U_n$ be the open region of $F$ between $D^0_n:=(\nabla\circ \Xi)(T^{k+1+n}_{i(n)}[D])$ and $D_m$.  
 
Let $V_n$ be the open region of $F$ between $D^1_n:=(\nabla\circ \Xi)(T^{k+1+n}_{j(n)}[D])$ and $D_m$.  
 
Let $a$ and $b$ be the top and bottom points of $J$. The set $$G_n:=U_n\cup V_n\cup J\setminus \{a,b\}$$ is open in $F$.  By construction $G_n$ is also a union of elements of $\mathcal F$, and $$\partial G_n\subseteq D^0_n\cup D^1_n\cup D_m.$$   
  
Let $\mathcal W$ be an open subset of $\mathcal F$ such that $e_m\in \mathcal W$ and $\mathcal W\cap \mathcal S=\varnothing$.   Then $E_m\subseteq W$, $D^0_n\to a\in W$, $D^1_n\to b\in W$, and  $\mathcal W$ contains all but finitely many simple closed curves in $\mathcal D_m$.   So there is an integer $N<\omega$ such that $\partial G_n \subseteq W$ for all $n\geq N$. 

Note that $H_n:=\overline{G_n} \cup E_m$ is a closed subset of $F$, and $H_n$ is a union of elements of $\mathcal F$. For any $n\geq N$ we have 
$$\partial_{\mathcal F} \mathcal G_n= \mathcal H_n\setminus  \mathcal G_n=\mathcal D^0_n\cup \mathcal D^1_n\cup  (\partial U_n\cap \partial V_n\setminus E_m)\cup \{e_m\}\subseteq \mathcal W,$$ so $\mathcal S$ misses the boundary of $\mathcal G_n$.  Since $\mathcal S$ is connected and $s\in \mathcal G_n$, it follows that    $\mathcal S\subseteq \mathcal G_n$. In conclusion,  $$\mathcal S\subseteq \bigcap\{\mathcal G_n:n<\omega\}=\mathcal J\setminus \{e_m\}.$$   Therefore $\mathcal S$ is an arc in $\mathcal D_m$. \end{proof}

By Claim 5, $\mathcal F$ is the union of a punctiform set $\mathcal P$ and a countable set $\mathcal Q$.  Indeed, let $\mathcal Q\supseteq  \{e_n:n<\omega\}$ be a countable set which contains a dense subset of each $\mathcal D_n$, and put $\mathcal P=\mathcal F\setminus \mathcal Q$. Every subcontinuum of $\mathcal F$ must intersect $\mathcal Q$, so $\mathcal F$ is Suslinian. 

This concludes Example 3.\end{ue}

\begin{figure}[h]
\centering
    \includegraphics[scale=0.8]{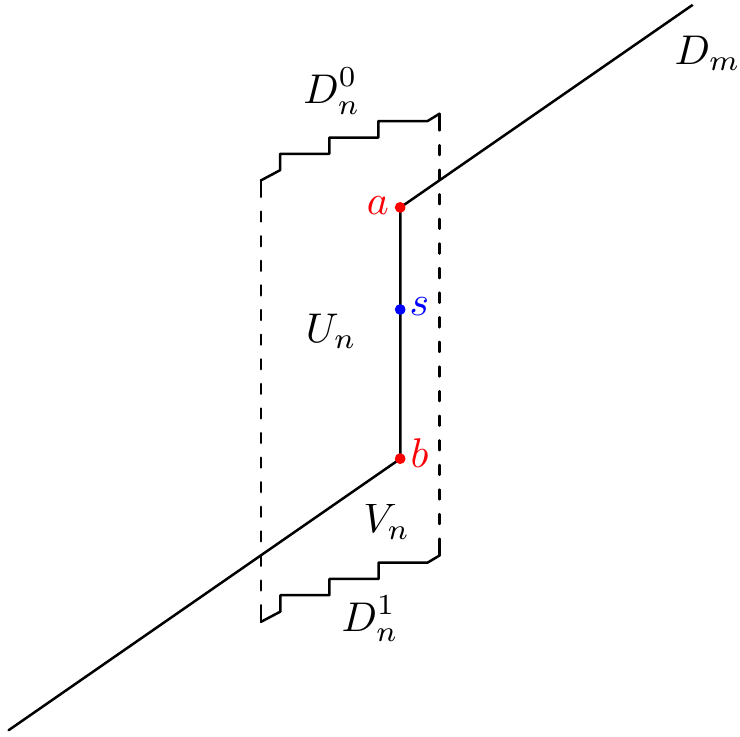}\;\;\;\;\;\;\;\;\;\;\;\;\;\;\;\;\;\;
    \includegraphics[scale=0.8]{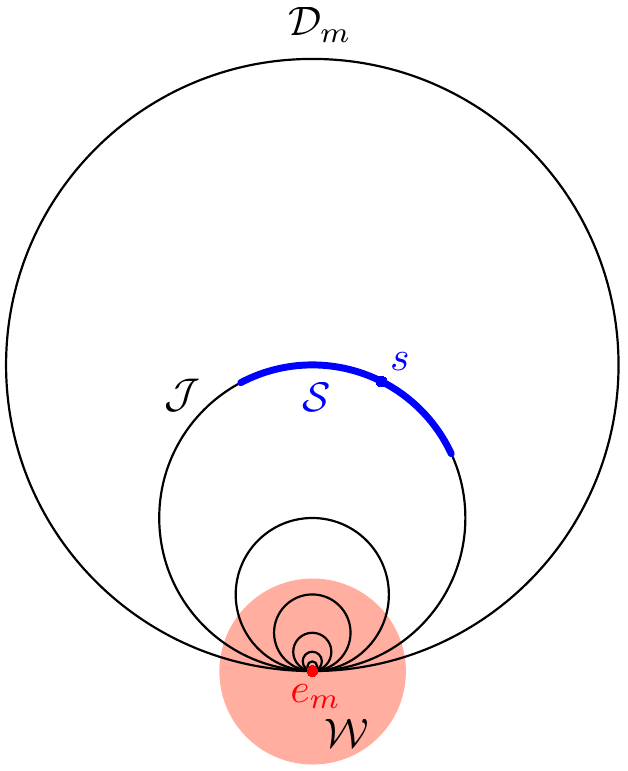}
    \caption{Illustration of Claim 5}
    \label{366}
\end{figure}

\begin{ue}There is  a Suslinian continuum $\mathcal K \supseteq X_\omega\cup \Delta$.

According to the construction in Example 2, $X_\omega$ is the union of a  null-sequence of disjoint open sets   homeomorphic to $X$.  Thus,  there is a null-sequence $D_0,D_1,...\subseteq K\setminus \Delta$ of disjoint copies of $D$  such that $K\setminus \bigcup \{D_n:n<\omega\}$ is zero-dimensional and $X_\omega \cap \bigcup \{D_n:n<\omega\}$ is  countable.   As in Example 3, it is now possible to define a Suslinian upper semi-continuous decomposition $\mathcal K$ of $K$, such that $X_\omega\cup \Delta\subseteq \mathcal K$.  
\end{ue}

We remark that each example in this section is  the union of a punctiform set and a countable set.  This is materially stronger than the Suslinian property by Piotr Minc's example in \cite{minc}.

\section{Conclusion}

The continua in Examples 3 and 4 are not rational.  Arguments in Claim 2 of Example 1 show that, in fact, every countable separator of $\mathcal F$ (Example 3) contains the vertex $\langle \frac{1}{2},0\rangle$.  Likewise,  every co-countable subset of $\mathcal K$ (Example 4) is connected. 
Rational compactifications of  \textnormal{\textbf{\d{$\nabla$}}}$X$ and $X_\omega\cup \Delta$  exist nevertheless  by  \cite[Theorem 1]{tym}, or the stronger  \cite[Theorem 8]{ili}. 
 
A point $e$ in a connected space $X$  is called an \textit{explosion point} if for every two points $x$ and $y$ in $X\setminus\{e\}$, there is a relatively clopen subset of $X\setminus \{e\}$ which contains $x$ but not $y$.  The dispersion point $\langle \frac{1}{2},0\rangle$ in Example 1 is not an explosion point because $\textnormal{\textbf{\d{$\nabla$}}}X\setminus \{\langle \frac{1}{2},0\rangle\}$ has non-degenerate quasi-components. Thus, the following variation of \cite[Problem 79]{cookproblem} is still open.

\begin{uq}Is it true that no biconnected set with an explosion point can be embedded into a rational continuum?\end{uq}

\subsection*{Added September 2019.}Question 1 was recently answered by Jan J. Dijkstra and the author in \cite{lip5}. Essentially,  we use the Axiom of Choice to show  $\textnormal{\textbf{\d{$\nabla$}}}X$ (Example 1) contains a connected set with an explosion point. 

 

\end{document}